            \DeclareFontFamily{U}{wncy}{}
            \DeclareFontShape{U}{wncy}{m}{n}{%
               <5>wncyr5%
               <6>wncyr6%
               <7>wncyr7%
               <8>wncyr8%
               <9>wncyr9%
               <10>wncyr10%
               <11>wncyr10%
               <12>wncyr6%
               <14>wncyr7%
               <17>wncyr8%
               <20>wncyr10%
               <25>wncyr10}{}
\newtheorem{thm}{Theorem}[section]
\newtheorem{lem}[thm]{Lemma}
\theoremstyle{definition}
\newtheorem*{remarks}{Remark}
\newtheorem{example}{Example}
\newcommand{\N}{\mathbb N}
\newcommand{\Z}{\mathbb Z}
\newcommand{\Q}{{\mathbb Q}}
\newcommand{\C}{\mathbb C}
\newcommand{\wh}{\widehat}
\def\al{\alpha}
\def\la{\lambda}
\def\ep{\epsilon}
\def\del{\delta}
\def\be{\beta}
\def\ga{\gamma}
\begin{document}

\raggedbottom

\title[]{Revisiting the action of a subgroup of the modular group on imaginary quadratic number fields}

\author{Abdulaziz Deajim}
\address{Department of Mathematics, King Khalid University,
P.O. Box 9004, Abha, Saudi Arabia} \email{deajim@gmail.com}

\keywords{quadratic fields, modular group, group action, orbits}
\subjclass[2010]{05A18, 05E18, 11R11, 11A25, 20F05}

\begin {abstract}
Consider the modular group $\mbox{PSL}(2,\mathbb{Z})=\langle x, \, y \,|\, x^2=y^3=1\rangle$ generated by the transformations $x: z\mapsto -1/z$ and $y:z\mapsto (z-1)/z$. Let $H$ be the proper subgroup $\langle y,\,v\,|\, y^3=v^3=1\rangle$ of $\mbox{PSL}(2,\mathbb{Z})$, where $v=xyx$. The reference (M. Ashiq and Q. Mushtaq, {\em Actions of a subgroup of the modular group on an imaginary quadratic field}, Quasigropus and Related Systems {\bf 14} (2006), 133--146) proposed results concerning the action of $H$ on the subset $\{\frac{a+\sqrt{-n}}{c}\,|\, a,b=\frac{a^2+n}{c}, c \in \mathbb{Z}, c\neq 0\}$ of the imaginary quadratic number field $\mathbb{Q}(\sqrt{-n})$ for a positive square-free integer $n$. In the current article, the author points out and corrects errors appearing in the aforementioned reference. Most importantly, the corrected estimate for the number of orbits arising from this action is given, where symmetries in the orbits play a crucial role in the proof.
\end {abstract}
\maketitle

\section{{\bf Introduction}}\label{intro}

The study of the action of the modular group $\mbox{PSL}(2,\Z)$ and its subgroups on various commutative algebraic structures has been common since the works of Graham Higman and his collaborators in the 1980s (\cite{HM}, \cite{Mush0}, \cite{Mush1}). Moreover, various tracks of applications of the action of the modular group (especially in multimedia security) have been appearing recently, see for instance \cite{A}, \cite{R1}, \cite{R2}, \cite{R3}, and \cite{Sh}. This indeed suggests a promising future for the applications of this seemingly purely mathematical topic.

Stothers in \cite{Sto} showed that the modular group $G=\mbox{PSL}(2, \Z)$ has the finite presentation $\langle x,\,y\,|\, x^2=y^3=1\rangle$, where $x$ and $y$ are, respectively, the linear fractional transformations $z \mapsto -1/z$ and $z \mapsto (z-1)/z$. For a square-free integer $m$, the action of $G$ and its subgroups on certain subsets of the quadratic number field $\Q(\sqrt{m})$ has been extensively studied (see for instance \cite{C etal}, \cite{D}, \cite{DA}, \cite{MR}, \cite{MZ}, \cite{Mush1}, \cite{Mush2}, \cite{R}, and \cite{Y}). In particular, Ashiq and Mushtaq (in \cite{AM1}) studied the proper subgroup $H$ of $G$ generated by the transformations $y$ and $v=xyx: z \mapsto -1/(z+1)$, where they showed that $y^3=v^3=1$ are defining relations for $H$, so $H=\langle y,\,v\,|\,y^3=v^3=1\rangle$.

For a positive square-free integer $n$, consider the following subset of the imaginary quadratic number field $\Q(\sqrt{-n})$:
$$\Q^*(\sqrt{-n}):=\{\frac{a+\sqrt{-n}}{c}\,|\, a,b=\frac{a^2+n}{c}, c \in \Z, c\neq 0\}.$$
Deajim and Aslam in \cite{DA} studied the action of $G$ on $\Q^*(\sqrt{-n})$, in which the main result is counting the orbits resulting from this action. Ashiq and Mushtaq in \cite{AM2} studied the action of the subgroup $H$ on $\Q^*(\sqrt{-n})$, where they suggested a formula to count the orbits resulting from this action. However, the author of this article has found some technical/mathematical mistakes in \cite{AM2} which invited the need to revisit its findings with the aim of correcting these mistakes and, particularly, giving the correct formula for the number of orbits.

In Section 2, the errors of \cite{AM2} are listed along with the reasoning for such a judgment. Section 3 aims primarily at correcting the main result of \cite{AM2}, namely \cite[Theorem 9]{AM2}, by giving in Theorem \ref{orbits} the accurate formula for the number of orbits arising from the action of $H$ on $\Q^*(\sqrt{-n})$. Besides, some lemmas which were used to prove Theorem \ref{orbits} help also in correcting some of the mistakes of \cite{AM2} mentioned in Section 2. Finally, a conclusion containing a summary of the main findings of the article is given in Section 4.

\section{{\bf A summary of errors in \cite{AM2}}}
Throughout this article, $n$ denotes a positive square-free integer.

An element $\al=(a+\sqrt{-n})/c\in \Q^*(\sqrt{-n})$ is said to be totally positive if $ac>0$, totally negative if $ac<0$, and of norm zero if $||\al||:=|a|=0$. For $b=(a^2+n)/c$, it is obvious that $bc>0$ and so $b$ and $c$ are always of the same sign.

For every $\al\in \Q^*(\sqrt{-n})$, denote the $H$-orbit $\{h(\al)\,|\, h\in H\}$ by $\al^H$ and the $G$-orbit $\{g(\al)\,|\, g\in G\}$ by $\al^G$. It is clear that $\al^H\subseteq \al^G$. Denote the set of $H$-orbits (resp. the set of $G$-orbits) in $\Q^*(\sqrt{-n})$ by $\mathcal{O}^H_{-n}$ (resp. $\mathcal{O}^G_{-n}$).

As usual, we denote by $\lfloor \; . \; \rfloor$ the floor function, by $d(k)$ the number of positive divisors of $k$, and by $d_{\leq i}(k)$, for $i,k\in \N$ with $i\leq k$, the number of positive divisors of $k$ which do not exceed $i$. For example, $d(15)=4$ and $d_{\leq 5}(15)=3$. Recall that $d$ is a multiplicative function and, for a prime $p$, $d(p^t)=t+1$ where $t\in \N$.

For $\al=(a+\sqrt{-n})/c\in \Q^*(\sqrt{-n})$, we sometimes use the notation $a_\al$, $b_\al$, and $c_\al$ for $a$, $b$, and $c$ respectively. Throughout the article and without mention, we shall make use of the following table which shows the effect of the action of $t\in \{x, y, y^2, v, v^2\}$ on $\al$ and can be verified through straightforward computations:\\
\begin{center}
\begin{tabular}{c|| c c c c c}
\hline
 $t$ & $a_{t(\al)}$ & \quad & $b_{t(\al)}$ & \quad & $c_{t(\al)}$\\
\hline
$x$ & $-a$ & $\quad$ & $c$ & $\quad$ & $b$\\
$y$ & $-a+b$ & $\quad$ & $-2a+b+c$ & $\quad$ & $b$\\
$y^2$ & $-a+c$ & $\quad$ & $c$ & $\quad$ & $-2a+b+c$\\
$v$ & $-a-c$ & $\quad$ & $c$ & $\quad$ & $2a+b+c$\\
$v^2$ & $-a-b$ & $\quad$ & $2a+b+c$ & $\quad$ & $b$\\
\hline
\end{tabular}
\end{center}
\begin{center} \tablename{$\;$1}: The action of $x,y,y^2,v,$ and $v^2$ \end{center}

\vspace{.5cm}
We summarize below the main errors in \cite{AM2} with some comments on them. \\

\begin{itemize}
\item[{\bf Claim 1}] \cite[line 1 of the proof of Theorem 6]{AM2}:\begin{quote} {\it Let $\al$ be a totally positive imaginary quadratic number. Then, by Theorem 3 (i), $y(\al)$ or $y^2(\al)$ is a totally negative imaginary quadratic number.\\} \end{quote}

Claim 1 is not necessarily true. We show in Lemma \ref{+,-,0} that if $\al$ is totally positive, then one of $y(\al)$ and $y^2(\al)$ is totally positive and the other is either totally positive, totally negative, or of norm zero. For instance, consider the totally positive element $\al=(2+\sqrt{-5})/3$. Then, both $y(\al)=(1+\sqrt{-5})/3$ and $y^2(\al)=(1+\sqrt{-5})/2$ are totally positive as well. Another example is given by $\be=(1+\sqrt{-1})/2$, then $y(\be)=\sqrt{-1}$ is of norm zero and $y^2(\be)=1+\sqrt{-1}$ is totally positive.
On the other hand, it should also be noted that \cite[Theorem 3 (i)]{AM2} states that $y(\al)$ and $y^2(\al)$ are totally positive when $\al$ is totally negative, which is true, but Claim 1 above strangely misquotes it. Note that Claim 1 being false renders the proof of \cite[Theorem 6]{AM2} false.\\

\item[{\bf Claim 2}] \cite[Theorem 8]{AM2}:\begin{quote} {\it (v) If $\al=(1+\sqrt{-n})/c_1$ ($n\neq 3$), where $1+n=c_1c_2$, $c_1\neq 1$ or $n+1$, then $\al$ is the only element of norm 1 in $\al^H$.\\} \end{quote}

It is strange that their proof of (v) contradicts the statement above, where the authors of \cite{AM2} stated explicitly that $y^2(\al)=(1+\sqrt{-n})/\frac{(n+1)}{2}$ when $c_1=2$. But then it is obvious that $y^2(\al)\in \al^H$ and is of norm 1 with $\al\neq y^2(\al)$! This shows that the statement of (v) is false in general. For instance, consider $\al=(1+\sqrt{-5})/2$. Then $\al$ is of norm 1 and satisfies the assumptions of Theorem 8 (v) mentioned above. However, $y^2(\al)=(1+\sqrt{-5})/3$ is also of norm 1 belonging to $\al^H$ and is different from $\al$.

As for the other parts of \cite[Theorem 8]{AM2}, it is also noted that the arguments in the proof of parts (iii) and (iv) of \cite[Theorem 8]{AM2} are not sufficient although the statements are correct. In fact, parts (iii) and (iv) specify certain cases where an $H$-orbit contains only one element of norm zero. Part of the proof of Theorem \ref{orbits} shows that this is always the case for any $n$ and for any $H$-orbit containing an element of norm zero.\\

\item[{\bf Claim 3}] \cite[Theorem 9]{AM2}: \begin{quote}{\it If $n\neq 3$, then the total number of orbits of $\Q^*(\sqrt{-n})$ under the action of $H$ are: \begin{itemize} \item[(i)] $2[d(n)+2d(n+1)-6]$ if $n$ is odd, and
\item[(ii)] $2[d(n)+2d(n+1)-4]$ if $n$ is even. \\ \end{itemize}}
\end{quote}

This is the main result of \cite{AM2}, which is inaccurate. Its proof relies on \cite[Theorem 8]{AM2}, which has some gaps (see Claim 2). The correct estimate of the number of orbits is given in Theorem \ref{orbits} below. We briefly give here some counter-examples to \cite[Theorem 9]{AM2}. For odd integers, consider $n=1$ and $n=21$ for instance. For $n=1$, the above estimate gives $|\mathcal{O}^H_{-1}|=-2$ which is absurd. Our estimate (see Theorem \ref{orbits}) gives $|\mathcal{O}^H_{-1}|=2$. For $n=21$, the above estimate gives $|\mathcal{O}^H_{-21}|=12$, whereas our estimate gives $|\mathcal{O}^H_{-21}|=16$. In fact, according to Theorem \ref{orbits}, the number of orbits in this case has to be congruent to zero modulo 8. For even integers, consider $n=26$ for instance. The above estimate gives $|\mathcal{O}^H_{-26}|=16$, whereas our estimate gives $|\mathcal{O}^H_{-26}|=24$. The details of the computations of these counter-examples are given just after the proof of Theorem \ref{orbits}.\\

\end{itemize}

\section{{\bf Lemmas and Main Result}}
The main result of the article is to prove Theorem \ref{orbits} which gives the precise estimate for the number of $H$-orbits $|\mathcal{O}^H_{-n}|$, correcting \cite[Theorem 9]{AM2}. Besides serving to prove Theorem \ref{orbits}, some of the lemmas below contribute also in correcting some of the errors of \cite{AM2} mentioned in Section 2.

\begin{lem}\label{+,-,0}
Let $\al=(a+\sqrt{-n})/c\in \Q^*(\sqrt{-n})$.
\begin{enumerate}
\item[(1)] If $\al$ is totally positive, then one of $y(\al)$ and $y^2(\al)$ is totally positive and the other is either totally positive, totally negative, or of norm zero; whereas both $v(\al)$ and $v^2(\al)$ are totally negative.
\item[(2)] If $\al$ is totally negative, then one of $v(\al)$ and $v^2(\al)$ is totally negative and the other is either totally negative, totally positive, or of norm zero; whereas both $y(\al)$ and $y^2(\al)$ are totally positive.
\item[(3)] If $\al$ is of norm zero, then both $y(\al)$ and $y^2(\al)$ are totally positive and both $v(\al)$ and $v^2(\al)$ are totally negative.
\end{enumerate}
\end{lem}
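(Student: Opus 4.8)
The plan is to reduce all three parts to the sign of a single integer, together with one elementary inequality. Write $\al=(a+\sqrt{-n})/c$ and $b=(a^2+n)/c$ as in the statement. Since $bc=a^2+n\geq 1>0$, the integers $b$ and $c$ are nonzero and share a common sign $\ep\in\{+1,-1\}$; consequently $\al$ is totally positive precisely when $a$ has sign $\ep$ (and $a\neq 0$), totally negative precisely when $a$ has sign $-\ep$ (and $a\neq 0$), and of norm zero precisely when $a=0$. By Table~1 the type of $t(\al)$ for $t\in\{y,y^2,v,v^2\}$ is determined by the sign of the product $P_t:=a_{t(\al)}\,c_{t(\al)}$, and a short expansion gives
\begin{gather*}
P_y=b(b-a),\qquad P_{y^2}=(c-a)(b+c-2a),\\
P_v=-(a+c)(2a+b+c),\qquad P_{v^2}=-b(a+b),
\end{gather*}
with $t(\al)$ totally positive, totally negative, or of norm zero according as $P_t$ is positive, negative, or zero. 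Note also that since $H$ acts on $\Q^*(\sqrt{-n})$, each $t(\al)$ lies in $\Q^*(\sqrt{-n})$ and hence is of exactly one of the three types, so the clause ``the other is either totally positive, totally negative, or of norm zero'' in (1) and (2) carries no extra content and needs no separate proof.

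The one non-formal ingredient is the inequality $|b|+|c|>2|a|$ --- and therefore also $\max(|b|,|c|)>|a|$ and $|b|+|c|-2|a|>0$. It follows at once from $bc=a^2+n$ since $(|b|+|c|)^2=(|b|-|c|)^2+4|b||c|\geq 4(a^2+n)>4a^2$.

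Granting this, the three parts become bookkeeping with Table~1. For (1), substitute $a=\ep|a|$, $b=\ep|b|$, $c=\ep|c|$: then $P_v=-(|a|+|c|)(2|a|+|b|+|c|)<0$ and $P_{v^2}=-|b|(|a|+|b|)<0$, so $v(\al)$ and $v^2(\al)$ are totally negative, while $P_y=|b|(|b|-|a|)$ and $P_{y^2}=(|c|-|a|)(|b|+|c|-2|a|)$ have the signs of $|b|-|a|$ and $|c|-|a|$ respectively (the second factor of $P_{y^2}$ being positive), so $\max(|b|,|c|)>|a|$ makes at least one of $y(\al),y^2(\al)$ totally positive. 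Part (2) is the same computation with $a=-\ep|a|$: now $P_y=|b|(|a|+|b|)>0$ and $P_{y^2}=(|a|+|c|)(2|a|+|b|+|c|)>0$, so $y(\al)$ and $y^2(\al)$ are totally positive, whereas $P_v$ and $P_{v^2}$ are negative exactly when $|c|>|a|$ and $|b|>|a|$ respectively, so $\max(|b|,|c|)>|a|$ again forces one of $v(\al),v^2(\al)$ to be totally negative. (Alternatively, (2) can be deduced from (1) via the involution $\sigma\colon(a,b,c)\mapsto(-a,b,c)$ of $\Q^*(\sqrt{-n})$, which interchanges totally positive with totally negative elements, fixes those of norm zero, and satisfies $v=\sigma y^2\sigma$ and $v^2=\sigma y\sigma$.) Part (3) is the case $a=0$, where $P_y=b^2>0$, $P_{y^2}=c(b+c)>0$, $P_v=-c(b+c)<0$, and $P_{v^2}=-b^2<0$, which is exactly the assertion.

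The main --- in fact the only --- difficulty is the ``at least one is totally positive/negative'' clause in (1) and (2): a priori any of $P_y,P_{y^2},P_v,P_{v^2}$ might vanish or take the wrong sign, and it is exactly $\max(|b|,|c|)>|a|$, i.e.\ $bc=a^2+n>a^2$, that rules this out; the companion bound $|b|+|c|-2|a|>0$ is what prevents the stray factor $b+c-2a$ in $P_{y^2}$ (resp.\ $2a+b+c$ in $P_v$) from reversing the relevant sign. Everything else is substitution into Table~1.
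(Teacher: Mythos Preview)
Your proof is correct. Both arguments rest on the same underlying fact---namely that $bc=a^2+n>a^2$ forces $b+c-2a$ (respectively $2a+b+c$) to have a definite sign and forces at least one of $|b|,|c|$ to exceed $|a|$---but you organize the argument differently from the paper. The paper treats the case $a,b,c>0$ by contradiction: it supposes neither $y(\al)$ nor $y^2(\al)$ is totally positive and then runs through sub-cases (the second negative, the second of norm zero, etc.) to force a contradiction, leaving the case $a,b,c<0$ as ``similar.'' You instead compute the four products $P_t=a_{t(\al)}c_{t(\al)}$ once, isolate the inequality $|b|+|c|>2|a|$ as the single non-formal ingredient, and read off all sign conclusions directly, handling both signs $\ep=\pm1$ uniformly. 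Your approach is shorter and more transparent about where the content lies; the paper's case analysis has the mild advantage of being closer to the table and requiring no preliminary algebra. Your parenthetical remark that (2) follows from (1) via the involution $(a,b,c)\mapsto(-a,b,c)$ conjugating $y$ to $v^2$ is also a clean observation absent from the paper.
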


\begin{proof} Once and for all, note here and elsewhere that Table 1 is used without mention.
\begin{enumerate}
\item[(1)] Assume that $a,b,c>0$ (the case $a,b,c<0$ is proved similarly). We show first that at least one of $y(\al)$ and $y^2(\al)$ is totally positive. To the contrary, suppose that neither of them is totally positive. If $y(\al)$ is totally negative, then (as $c_{y(\al)}=b>0$) $a_{y(\al)}=-a+b<0$. As $b_{y^2(\al)}=c>0$ and as $b_{y^2(\al)}$ and $c_{y^2(\al)}$ have the same sign, we have $-a+b-a+c=c_{y^2(\al)}>0$ implying that $a_{y^2(\al)}=-a+c>0$ (as $-a+b<0$). So, $y^2(\al)$ is totally positive, a contradiction. Similarly, if $y^2(\al)$ is totally negative, then $y(\al)$ is totally positive, a contradiction. On the other hand, if $y(\al)$ is of norm zero, then $a_{y(\al)}=-a+b=0$. So, $c_{y^2(\al)}=-a+b-a+c=-a+c=a_{y^2(\al)}$ implying that (as $c_{y^2(\al)}\neq 0$) $a_{y^2(\al)}c_{y^2(\al)}>0$ and thus $y^2(\al)$ is totally positive, a contradiction. Similarly, if $y^2(\al)$ is of norm zero, then $y(\al)$ is totally positive, a contradiction. This argument leads to the assertion that at least one of $y(\al)$ and $y^2(\al)$ is totally positive. Suppose that it is $y(\al)$ which is totally positive (if it were $y^2(\al)$, then the proof would be similar). It then follows that $y^2(\al)$ is either totally positive, totally negative, or of norm zero depending respectively on whether $a_{y^2(\al)}>0$, $a_{y^2(\al)}<0$, or $a_{y^2(\al)}=0$. As for $v(\al)$ and $v^2(\al)$, note that having $a_{v(\al)}=-a-c<0$ and $c_{v(\al)}=2a+b+c>0$ implies that $v(\al)$ is totally negative. Also, having $a_{v^2(\al)}=-a-b<0$ and $c_{v^2(\al)}=b>0$ implies that $v^2(\al)$ is totally negative.
\item[(2)] Similar to the proof of part (1).
\item[(3)] Suppose that $a=0$. Then $a_{y(\al)}c_{y(\al)}=b^2>0$, $a_{y^2(\al)}c_{y^2(\al)}=bc+c^2>0$ ($b$ and $c$ are of the same sign), and so $y(\al)$ and $y^2(\al)$ are totally positive. On the other hand, $a_{v(\al)}c_{v(\al)}=-bc-c^2<0$, $a_{v^2(\al)}c_{v^2(\al)}=-b^2<0$, and so $v(\al)$ and $v^2(\al)$ are totally negative.
\end{enumerate}
\end{proof}

We have the following obvious observations (see \cite[Lemmas 3.2 and 3.3]{DA}).
\begin{lem}\label{x}$($\cite[Lemmas 3.2 and 3.3]{DA}$)$ For $\al=(a+\sqrt{-n})/c\in \Q^*(\sqrt{-n})$, we have
\begin{itemize}
\item[(i)] $\al$ is totally positive if and only if $x(\al)$ is totally negative.
\item[(ii)] $\al$ has norm zero if and only if $x(\al)$ has norm zero.
\end{itemize}
\end{lem}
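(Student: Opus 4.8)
The statement to prove is Lemma~\ref{x}, which asserts two equivalences: $\al$ is totally positive iff $x(\al)$ is totally negative, and $\al$ has norm zero iff $x(\al)$ has norm zero. The plan is to argue directly from Table~1, exactly as the paper's conventions suggest, since the action of $x$ on $\al=(a+\sqrt{-n})/c$ is given there explicitly by $a_{x(\al)}=-a$, $b_{x(\al)}=c$, and $c_{x(\al)}=b$. The whole proof reduces to tracking signs of the product $a_{x(\al)}c_{x(\al)}$ and the value $|a_{x(\al)}|$ against those of $\al$.

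First I would handle part (ii), which is immediate: since $||\al||=|a|$ and $||x(\al)||=|a_{x(\al)}|=|-a|=|a|$, the two norms are literally equal, so one is zero iff the other is. Then for part (i), I would compute $a_{x(\al)}c_{x(\al)}=(-a)(b)=-ab$. Here I would invoke the standing observation (made at the start of Section~2) that $b$ and $c$ always have the same sign, equivalently $bc>0$; hence $-ab$ and $-ac$ have the same sign, so $a_{x(\al)}c_{x(\al)}>0 \iff -ac>0 \iff ac<0$. Reading off definitions: $x(\al)$ is totally negative means $a_{x(\al)}c_{x(\al)}<0$, i.e.\ $ac>0$, i.e.\ $\al$ is totally positive. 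That is the whole argument in one line, and I would spell out both directions of the iff to be safe.

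There is essentially no obstacle here: the lemma is labelled ``obvious observations'' in the paper and is cited from \cite[Lemmas 3.2 and 3.3]{DA}, so the only thing to be careful about is making sure the sign bookkeeping uses the correct column of Table~1 and the $bc>0$ fact rather than anything deeper. One could even state both parts at once by noting that the map $a\mapsto -a$ on the numerator, combined with swapping $b$ and $c$ (which preserves the common-sign property), flips the sign of $ac$ while preserving $|a|$. If a shorter write-up is desired, I would simply say: ``By Table~1, $a_{x(\al)}=-a$ and $c_{x(\al)}=b$; since $b,c$ have the same sign, $a_{x(\al)}c_{x(\al)}$ has the opposite sign to $ac$, giving (i), while $|a_{x(\al)}|=|a|$ gives (ii).''
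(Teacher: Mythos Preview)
Your argument is correct and is exactly the straightforward sign-tracking via Table~1 that the paper implicitly has in mind; in fact the paper does not supply its own proof here but simply labels the lemma ``obvious observations'' and cites \cite[Lemmas 3.2 and 3.3]{DA}. Your write-up (especially the one-line version at the end) is precisely the intended verification.
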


For $\al \in \Q^*(\sqrt{-n})$, the sets $\stackrel{y}{\wh{\;\al\;}}=\{\al, y(\al), y^2(\al)\}$ and $\stackrel{v}{\wh{\;\al\;}}=\{\al, v(\al), v^2(\al)\}$ are called respectively the {\it $y$-cycle} and the {\it $v$-cycle} of $\al$ (or just {\it cycles} when $\al$, $y$, and $v$ are clear in the context). We call a cycle totally positive (resp. totally negative) if all its elements are totally positive (resp. if all its elements are totally negative).

\begin{lem}\label{cycles}
Under the action of $H$ on $\Q^*(\sqrt{-n})$, the following holds for $\al=(a+\sqrt{-n})/c\in \Q^*(\sqrt{-n})$:
\begin{itemize}
\item[(i)] \cite[Lemma 3.3]{DA} The cycle $\stackrel{y}{\wh{\;\al\;}}$ is totally positive if and only if either $($$a>0$, $a<b$, $a<c$$)$ or $($$a<0$, $a>b$, $a>c$$)$.
\item[(ii)] The cycle $\stackrel{v}{\wh{\;\al\;}}$ is totally negative if and only if either $($$a>0$, $-a>b$, $-a>c$$)$ or $($$a<0$, $-a<b$, $-a<c$$)$.
\item[(iii)] The cycle $\stackrel{y}{\wh{\;\al\;}}$ is totally positive if and only if the cycle $\stackrel{v}{\,\wh{x(\al)\,}}$ is totally negative.
\item[(iv)] If the cycle $\stackrel{y}{\wh{\;\al\;}}$ is totally positive, then $\al^H\neq x(\al)^H$ (i.e. $\al$ and $x(\al)$ belong to distinct $H$-orbits).
\item[(v)] If $n\neq 1$ and $\al$ has norm zero, then $\al^H\neq x(\al)^H$.
\end{itemize}
\end{lem}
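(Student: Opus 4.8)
Part~(i) is quoted verbatim from \cite{DA}, so nothing is needed there; the plan is to deduce (ii) and (iii) from (i) and Lemma~\ref{x}, and to treat (iv) and (v) together by exploiting that $H$ sits inside $G$ with index~$2$. For (iii), I would first record the operator identities $xvx=y$ and $xv^{2}x=y^{2}$, which are immediate from $v=xyx$ and $x^{2}=1$. Evaluating them at $x(\al)$ gives $x(v(\al))=y(x(\al))$ and $x(v^{2}(\al))=y^{2}(x(\al))$, so applying $x$ to the three members of the $v$-cycle $\stackrel{v}{\wh{\;\al\;}}$ produces exactly the three members of the $y$-cycle $\stackrel{y}{\wh{x(\al)}}$. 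By Lemma~\ref{x}(i)--(ii), together with the fact that every element is totally positive, totally negative, or of norm zero, an element is totally negative if and only if its $x$-image is totally positive; hence $\stackrel{v}{\wh{\;\al\;}}$ is totally negative $\iff\stackrel{y}{\wh{x(\al)}}$ is totally positive. Applying this with $x(\al)$ in place of $\al$ and using $x^{2}=1$ yields (iii). For (ii), I would then insert $x(\al)$ --- whose $(a,b,c)$-data is $(-a,c,b)$ by Table~1 --- into the inequality criterion of part~(i); its two cases simplify at once to the two cases in the statement of (ii).

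For (iv) and (v), the key observation is that $H$ is a normal subgroup of $G$ of index~$2$, with $G=H\cup Hx$. Consequently, for any $\al$ one has $\al^{H}=x(\al)^{H}$ precisely when some element of the nontrivial coset $G\setminus H$ fixes $\al$: indeed $h(\al)=x(\al)$ with $h\in H$ is equivalent to $xh\in\mathrm{Stab}_{G}(\al)$, and $xh\in G\setminus H$. So it suffices to prove $\mathrm{Stab}_{G}(\al)\subseteq H$ under each hypothesis. Now $\mathrm{Stab}_{G}(\al)$ is a finite subgroup of $\mathrm{PSL}(2,\Z)$ (the modular group acts properly discontinuously on the upper half-plane, so stabilizers of non-real points are finite), hence cyclic of order $1$, $2$, or $3$; an order-$3$ generator is a conjugate of $y$ and therefore lies in $H$ (since $H\trianglelefteq G$ and $y\in H$), whereas an order-$2$ element is a conjugate of $x$ and therefore lies in $G\setminus H$. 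Hence $\mathrm{Stab}_{G}(\al)\not\subseteq H$ would force $\al$ to be fixed by an order-$2$ elliptic element, i.e.\ to be $G$-equivalent to a fixed point $\pm\sqrt{-1}$ of $x$; then $\al\in\Q(\sqrt{-1})$, and since $n$ is square-free with $\al\notin\Q$ this forces $n=1$.

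It remains to rule out $n=1$. In (v) this is exactly the standing hypothesis $n\neq1$. In (iv), I would observe that for $n=1$ no $\al\in\Q^{*}(\sqrt{-1})$ has a totally positive $y$-cycle: by part~(i), such a cycle with $a>0$ needs $a<c$ and $a<b=(a^{2}+1)/c$, the latter giving $a(c-a)<1$ with $a$ and $c-a$ positive integers --- impossible; the case $a<0$ is symmetric and $a=0$ gives norm zero. Thus (iv) is vacuous when $n=1$ and follows from $\mathrm{Stab}_{G}(\al)\subseteq H$ otherwise. For (v) one may also argue directly, bypassing the structure theory: writing $\al=\sqrt{-n}/c$ and solving $g(\al)=\al$ for $g=\begin{pmatrix}p&q\\r&s\end{pmatrix}\in\mathrm{SL}(2,\Z)$ forces $p=s$, $q=-nr/c^{2}$, and $p^{2}+nr^{2}/c^{2}=1$; the nonnegative integer $nr^{2}/c^{2}$ is then $0$ or $1$, and it cannot be $1$ (that would make $n$ a perfect square, hence $n=1$), so $r=0$, $g=\pm I$, and $\mathrm{Stab}_{G}(\al)$ is trivial.

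The step I expect to be the main obstacle --- or rather the only nonroutine ingredient --- is the group theory underlying (iv)/(v): confirming that $[G:H]=2$ with $H$ normal, and invoking the classification of finite subgroups of $\mathrm{PSL}(2,\Z)$ (cyclic of order $1$, $2$, or $3$) together with which coset of $H$ each torsion element occupies. Everything else is bookkeeping with Table~1 and the sign trichotomy. A self-contained alternative would trace reduced words in the free product $H=\langle y\rangle\ast\langle v\rangle$ and track sign changes via Lemma~\ref{+,-,0}, but that route is noticeably more cumbersome, so I would favour the stabilizer argument.
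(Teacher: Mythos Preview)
Your argument is correct and follows a genuinely different route from the paper. For (ii)--(iii) the paper proves (ii) by a direct sign-chase with Table~1 and then derives (iii) from (i) and (ii); you reverse the order, obtaining (iii) first from the conjugation identities $xv^{\epsilon}x=y^{\epsilon}$ together with Lemma~\ref{x}, and then reading (ii) off by inserting the $(a,b,c)$-data of $x(\al)$ into (i). This is slightly slicker and avoids duplicating the casework behind (i). The real divergence is in (iv)--(v): the paper writes every nontrivial $h\in H$ in one of six reduced forms $h_1,\dots,h_6$ and, case by case, uses Lemma~\ref{+,-,0} to show $h_i(\al)$ has the wrong sign to equal $x(\al)$ --- exactly the word-tracing you flagged as the cumbersome alternative. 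Your approach instead exploits that $H\trianglelefteq G$ with index $2$, reducing $\al^H\neq x(\al)^H$ to $\mathrm{Stab}_G(\al)\subseteq H$, and then uses the torsion structure of $\mathrm{PSL}(2,\Z)\cong(\Z/2)\ast(\Z/3)$ to see that only order-$2$ elements lie outside $H$; this forces any exception to be $G$-equivalent to $\pm\sqrt{-1}$, hence $n=1$, which you dispose of in (iv) by checking that no $y$-cycle in $\Q^*(\sqrt{-1})$ is totally positive. Your route is shorter and makes the special role of $n=1$ transparent, at the cost of importing standard facts (normality and index of $H$, proper discontinuity, Kurosh-type classification of finite subgroups) that the paper's self-contained sign-tracking avoids.
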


\begin{proof}
\begin{itemize}
\item[(i)] This is \cite[Lemma 3.3]{DA}.
\item[(ii)] Suppose that $\stackrel{v}{\wh{\;\al\;}}$ is totally negative. As $\al$ is totally negative, either ($a>0$ and $b,c<0$) or ($a<0$ and $b,c>0$). Assume that $a>0$ and $b,c<0$. Since $b_{v(\al)}=c<0$, we get $c_{v(\al)}<0$. As further $v(\al)$ is totally negative, we must then have $-a-c=a_{v(\al)}>0$ and so $-a>c$. Since also $c_{v^2(\al)}=b<0$ and $v^2(\al)$ is totally negative, $a_{v^2(\al)}=-a-b>0$ and so $-a>b$. A similar argument yields that if $a<0$ and $b,c>0$ then $-a<b$ and $-a<c$.

    Conversely, suppose that $a>0$, $-a>b$, and $-a>c$ (the case when $a<0$, $-a<b$, and $-a<c$ is handled similarly). Since $a>0$ and $-a>c$, $c<0$ and so $\al$ is totally negative. As $a_{v(\al)}=-a-c>0$ and $b_{v(\al)}=c<0$ and $b_{v(\al)}, c_{v(\al)}$ are of the same sign, $v(\al)$ is totally negative too. As $a_{v^2(\al)}=-a-b>0$ and $c_{v^2(\al)}=b<0$, $v^2(\al)$ is totally negative as well. Hence, the cycle $\stackrel{v}{\wh{\;\al\;}}$ is totally negative.
\item[(iii)] Suppose that $\stackrel{y}{\wh{\;\al\;}}$ is totally positive. By (i), if $a>0$, $a<b$, and $a<c$, then it follows from (ii) that $\stackrel{v}{\,\wh{x(\al)\,}}$ is totally negative since $a_{x(\al)}=-a<0$, $-a_{x(\al)}=a<c=b_{x(\al)}$, and $-a_{x(\al)}=a<b=c_{x(\al)}$. The case when $a<0$, $a>b$, and $a>c$ is similar.

    Conversely, suppose that $\stackrel{v}{\,\wh{x(\al)\,}}$ is totally negative. Then by (ii), if $a_{x(\al)}=-a>0$, $-a_{x(\al)}=a>b_{x(\al)}=c$, and $-a_{x(\al)}=a>c_{x(\al)}=b$, then it follows from (i) that $\stackrel{y}{\wh{\;\al\;}}$ is totally positive. The case when $a_{x(\al)}<0$, $-a_{x(\al)}<b_{x(\al)}$, and $-a_{x(\al)}<c_{x(\al)}$ is similar.
\item[(iv)] Suppose that $\stackrel{y}{\wh{\;\al\;}}$ is totally positive. By (iii), $\stackrel{v}{\,\wh{x(\al)\,}}$ is totally negative. To show that $\al^H\neq x(\al)^H$, we need to show that $h(\al)\neq x(\al)$ for any $h\in H$. It can be seen that an arbitrary non-identity element of $H$ takes one and only one of the following forms:
    \begin{align*}
    h_1&=y^{\ep}, \;\mbox{for}\; \ep\in \{1,2\} \\
    h_2&=v^{\del}, \;\mbox{for}\; \del\in \{1,2\}\\
    h_3&=y^{\ep_k}v^{\del_k} \dots y^{\ep_2}v^{\del_2}y^{\ep_1}v^{\del_1}, \;\mbox{for}\; k\geq 1, \ep_i, \del_i\in \{1,2\}, 1\leq i \leq k\\
     h_4&=v^{\del_k}y^{\ep_{k-1}}v^{\del_{k-1}} \dots y^{\ep_2}v^{\del_2}y^{\ep_1}v^{\del_1}, \;\mbox{for}\; k\geq 2, \ep_i, \del_j\in \{1,2\}, 1\leq i \leq k-1, 1\leq j \leq k\\
      h_5&=v^{\del_k}y^{\ep_k} \dots v^{\del_2}y^{\ep_2}v^{\del_1}y^{\ep_1}, \;\mbox{for}\; k\geq 1, \ep_i, \del_i\in \{1,2\}, 1\leq i \leq k\\
      h_6&=y^{\ep_k}v^{\del_{k-1}}y^{\ep_{k-1}} \dots v^{\del_2}y^{\ep_2}v^{\del_1}y^{\ep_1}, \;\mbox{for}\; k\geq 2, \ep_i, \del_j\in \{1,2\}, 1\leq i \leq k, 1\leq j \leq k-1
    \end{align*}
\begin{itemize}
\item[\underline{The $h_1$ case:}] Since there is no $z\in \C$ such that $y^\ep(z)=x(z)$, we get $h_1(\al) = y^\ep(\al)\neq x(\al)$. Alternatively, $y^\ep(\al)\neq x(\al)$ since $y^\ep(\al)$ is totally positive and $x(\al)$ is totally negative (by Lemma \ref{x}).
\item[\underline{The $h_2$ case:}] Since there is no $z\in \C$ such that $v^\del(z)=x(z)$, we get $h_2(\al) = v^\del(\al)\neq x(\al)$. Alternatively, $v^\del (\al)\neq x(\al)$ since if equality holds then $xy^\del x(\al)=x(\al)$ would imply that $x(\al)=y^{-\del}(\al)$, a contradiction (see the $h_1$ case above).
\item[\underline{The $h_3$ case:}] As $\al$ is totally positive, it follows from Lemma \ref{+,-,0} (1) that $v^{\del_1}(\al)$ is totally negative and so, by Lemma \ref{+,-,0} (2), $y^{\ep_1}v^{\del_1}(\al)$ is totally positive. Repeating this argument for $i=1, \dots, k$, we get that $h_3(\al)$ is totally positive. Since $x(\al)$ is totally negative, $h_3(\al)\neq x(\al)$.
\item[\underline{The $h_4$ case:}] If we let $h_4^*=v^{-\del_k}h_4=y^{\ep_{k-1}}v^{\del_{k-1}} \dots y^{\ep_2}v^{\del_2}y^{\ep_1}v^{\del_1}$, then by an argument similar to that in the $h_3$ case, $h_4^*(\al)$ is totally positive. If $h_4(\al)=x(\al)$, then $h_4^*(\al)=v^{-\del_k}x(\al)\in \stackrel{v}{\,\wh{x(\al)\,}}$ and so $h_4^*(\al)$ is totally negative, a contradiction.
\item[\underline{The $h_5$ case:}] If we let $h_5^*=v^{-\del_k}h_5=y^{\ep_k} \dots v^{\del_2}y^{\ep_2}v^{\del_1}y^{\ep_1}$, then as $y^{\ep_1}(\al)$ is totally positive, it follows by an argument similar to that in the $h_3$ case that $h_5^*(\al)$ is totally positive. If $h_5(\al)=x(\al)$, then $h_5^*(\al)=v^{-\del_k}x(\al)$, which is impossible since $h_5^*(\al)$ is totally positive, while $v^{-\del_k}x(\al)$ is totally negative (see the $h_4$ case).
\item[\underline{The $h_6$ case:}] If we let $h_6^*=y^{-\ep_k}h_6=v^{\del_{k-1}}y^{\ep_{k-1}} \dots v^{\del_2}y^{\ep_2}v^{\del_1}y^{\ep_1}$, then as $y^{\ep_1}(\al)$ is totally positive, it follows by an argument similar to that in the $h_3$ case that $h_6^*(\al)$ is totally negative. If $h_6(\al)=x(\al)$, then $h_6^*(\al)=y^{-\ep_k}x(\al)$, which is impossible since $h_6^*(\al)$ is totally negative, while $y^{-\ep_k}x(\al)$ is totally positive by Lemma \ref{+,-,0} (2).
\end{itemize}
Now, by having just shown the impossibility of all cases above, we conclude that there is no $h\in H$ such that $h(\al)=x(\al)$ and therefore $\al$ and $x(\al)$ belong to distinct $H$-orbits as claimed.
\item[(v)] Remark first that we excluded the case $n=1$ since the only elements of $\C$ fixed by $x$ are $\pm \sqrt{-1} \in \Q^*(\sqrt{-1})$ (i.e. $x(\pm \sqrt{-1})=\pm \sqrt{-1}$); see \cite[Lemma 3.4]{DA}. Let $n \neq 1$ and suppose that $\al$ has norm zero. Then, $x(\al)$ has norm zero too. We need to show that $h(\al) \neq x(\al)$ for any $h\in H$. By Lemma \ref{+,-,0} (3), both $y(\al)$ and $y^2(\al)$ are totally positive, and both $v(\al)$ and $v^2(\al)$ are totally negative. Now, for $h_1, \dots, h_6$ as in (iv), and since none of $y(\al), \, y^2(\al),\, v(\al), v^2(\al)$ has norm zero it is easily seen that $h_i(\al)$ is either totally negative or totally positive for every $i=1, \dots, 6$ (see the argument in (iv)). Thus $h_i(\al)\neq x(\al)$ for any $i=1, \dots, 6$. This settles the claim.
\end{itemize}

\end{proof}

\begin{example}
For $n=7$ and $\al=(-1+\sqrt{-7})/(-4)$, it can be checked that $y(\al)=(-1+\sqrt{-7})/(-2)$, $y^2(\al)=(-3+\sqrt{-7})/(-4)$, and so $\al$, $y(\al)$, and $y^2(\al)$ are all totally positive. On the other hand, $x(\al)=(1+\sqrt{-7})/(-2)$, $vx(\al)=(1+\sqrt{-7})/(-4)$, and $v^2x(\al)=(3+\sqrt{-7})/(-4)$ which are all totally negative.
\end{example}

\begin{lem}\label{orbits lemma} $($\cite[Corollary 3.1]{DA}$)$
Under the action of $G$ on $\Q^*(\sqrt{-n})$, we have the following:
\begin{itemize}
\item[(i)] Every $G$-orbit in $\Q^*(\sqrt{-1})$ contains a unique element of norm zero.
\item[(ii)] Every $G$-orbit in $\Q^*(\sqrt{-2})$ contains a unique pair of distinct elements of norm zero.
\item[(iii)] Every $G$-orbit in $\Q^*(\sqrt{-n})$, for $n\geq 3$, contains either a unique pair of distinct elements of norm zero or a unique totally positive cycle, but not both.
\end{itemize}
\end{lem}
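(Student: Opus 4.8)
\emph{Approach.} The three items amount to a single statement: Gauss's reduction of positive-definite integral binary quadratic forms, read through the classical bijection between $\Q^*(\sqrt{-n})$ and forms of discriminant $-4n$, produces exactly two types of canonical representative of a $G$-orbit — an element $\sqrt{-n}/c$ of norm zero, or a totally positive $y$-cycle. I would proceed in three steps.

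\emph{Step 1 (reduce to $c>0$; the dictionary).} The involution $\al=(a+\sqrt{-n})/c\mapsto\overline{\al}=(-a+\sqrt{-n})/(-c)$ of $\Q^*(\sqrt{-n})$ satisfies $\overline{g(\al)}=g(\overline{\al})$ for $g\in G=\mathrm{PSL}(2,\Z)$ (the transformations of $G$ have integer coefficients), so it permutes $G$-orbits, preserves having norm zero, sends totally positive cycles to totally positive cycles, and swaps the elements with $c>0$ and those with $c<0$. Hence it suffices to treat orbits of elements with $c>0$ (for which then $b>0$), the rest following by applying the involution. To such an $\al$ attach the form $Q_\al=cX^2-2aXY+bY^2$ of discriminant $4a^2-4bc=-4n$; every positive-definite integral form of discriminant $-4n$ arises, $G$ acts as $\mathrm{SL}(2,\Z)$, and the two elementary reduction moves lie in $G$: $x\colon z\mapsto-1/z$ turns $(c,-2a,b)$ into $(b,2a,c)$, while $T:=yx\colon z\mapsto z+1$ gives $T(\al)=(a+c+\sqrt{-n})/c$, so $T^{\pm1}$ reduces $a$ modulo $c$.

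\emph{Step 2 (existence, and why $n\ge3$).} Running Gauss's algorithm — reduce $a$ modulo $c$ by $T^{\pm1}$, then swap by $x$ whenever the last coefficient is the smaller; this halts since each swap strictly decreases a positive integer — produces $\al_0=(a_0+\sqrt{-n})/c_0$ in the orbit with $|a_0|\le c_0/2$ and $c_0\le b_0$. If $a_0=0$, then $\al_0=\sqrt{-n}/c_0$ has norm zero with $c_0\mid n$. If $a_0\neq0$, then $0<|a_0|<c_0\le b_0$, so Lemma~\ref{cycles}(i) makes the $y$-cycle of $\al_0$ (if $a_0>0$), or of $x(\al_0)=(-a_0+\sqrt{-n})/b_0$ (if $a_0<0$), totally positive. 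Thus every $G$-orbit contains an element of norm zero or a totally positive cycle. Moreover a totally positive cycle $\{\ga,y(\ga),y^2(\ga)\}$ with $c_\ga>0$ has $1\le a_\ga<\min(b_\ga,c_\ga)$ by Lemma~\ref{cycles}(i), so $b_\ga,c_\ga\ge a_\ga+1$ and $a_\ga^2+n=b_\ga c_\ga\ge(a_\ga+1)^2$, forcing $n\ge2a_\ga+1\ge3$. Hence for $n\in\{1,2\}$ no totally positive cycle exists, every orbit meets an element of norm zero, and since the norm-zero elements with $c>0$ are exactly $\sqrt{-n}/c$ with $c\mid n$, $c>0$, this is the single element $\sqrt{-1}$ for $n=1$ and (a single orbit, via $x(\sqrt{-2})=\sqrt{-2}/2$) the distinct pair $\{\sqrt{-2},\sqrt{-2}/2\}$ for $n=2$; with Step~1 this proves (i) and (ii).

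\emph{Step 3 (uniqueness and exclusivity for $n\ge3$; the obstacle).} For $n\ge3$ one uses the uniqueness of the Gauss-reduced form in each $\mathrm{SL}(2,\Z)$-class, the sole ambiguity being the classical identifications $(A,A,C)\sim(A,-A,C)$ and $(A,B,A)\sim(A,-B,A)$, whose two members lie in a common $y$-cycle — so each orbit has a genuinely unique reduced configuration. The reduced form of an orbit has middle coefficient $0$ exactly when the orbit meets an element of norm zero: $\sqrt{-n}/c$ corresponds to $(c,0,n/c)$, which reduces (after at most one $x$) to $(\min(c,n/c),0,\max(c,n/c))$, whence $\sqrt{-n}/c$ and $\sqrt{-n}/c'$ are $G$-equivalent iff $c'\in\{c,n/c\}$; since $n\ge3$ is square-free it is not a perfect square, so $c\neq n/c$ and such an orbit contains precisely the distinct pair $\{\sqrt{-n}/c,\sqrt{-n}/(n/c)\}$ and no other element of norm zero. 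In the remaining case the middle coefficient is nonzero, so by Step~2 the orbit carries a totally positive cycle, which is unique because any totally positive cycle reduces to the orbit's reduced configuration; and the two alternatives are mutually exclusive since a reduced form cannot both have and lack a vanishing middle coefficient. The main obstacle is exactly this last point — that a totally-positive-cycle element $\ga$ with $Q_\ga=(c,-2a,b)$, $0<a<\min(b,c)$, never reduces to a form $(A,0,C)$ — which needs a careful tracking of the reduction steps; here the content of $Q_\al$ (equal to $1$, or to $2$ only when $n\equiv3\pmod4$) disposes of part of the analysis, since a form $(A,0,C)$ of discriminant $-4n$ has content $\gcd(A,C)=1$ as $AC=n$ is square-free. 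Doubling via Step~1 completes (iii).
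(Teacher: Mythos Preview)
The paper does not prove this lemma at all; it is simply quoted from \cite[Corollary~3.1]{DA} with no argument given. Your route via the classical dictionary between $\Q^*(\sqrt{-n})$ with $c>0$ and positive-definite integral forms of discriminant $-4n$ is therefore an independent proof, and Steps~1 and~2 are clean: the sign of $c$ is indeed $G$-invariant (Table~1 shows every generator preserves $\operatorname{sign}(b)=\operatorname{sign}(c)$), and Gauss reduction lands you on an $\al_0$ with $2|a_0|\le c_0\le b_0$, which is either norm-zero or yields a totally positive $y$-cycle via Lemma~\ref{cycles}(i); the inequality $n\ge 2a_\ga+1\ge 3$ correctly rules out totally positive cycles for $n\in\{1,2\}$.

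The genuine gap is the one you flag yourself in Step~3: you need that an orbit carrying a totally positive cycle cannot also contain a norm-zero element, and that the cycle is unique. Your content remark disposes only of the content-$2$ cycles (those with $a$ odd and $b,c$ even when $n\equiv 3\pmod 4$) and leaves the content-$1$ case open. The clean way to close it is to note that for $\ga$ with $0<a_\ga<\min(b_\ga,c_\ga)$ and $c_\ga>0$, exactly one of the six elements $\ga,\,y(\ga),\,y^2(\ga),\,x(\ga),\,xy(\ga),\,xy^2(\ga)$ is Gauss-reduced: writing out their $(a,b,c)$-triples, the six reducedness conditions are precisely the six linear orderings of $2a_\ga,\,b_\ga,\,c_\ga$. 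The $a$-value of the reduced one is $\pm a_\ga$, $\pm(b_\ga-a_\ga)$, or $\pm(c_\ga-a_\ga)$, all nonzero. Hence the orbit's unique reduced form has $a_0\neq 0$, so (by uniqueness of the reduced form) the orbit contains no element of norm zero; and since this construction sets up a bijection between totally positive cycles and reduced forms with $a_0\neq 0$, the cycle is unique. With this observation your Step~3 goes through without further appeal to content.
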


The aim of our main result below is to compute the cardinality $|\mathcal{O}^H_{-n}|$ correcting the estimate given in \cite[Theorem 9]{AM2} (see Claim 3 in Section 2). In particular, we utilize the above results to simply show that $$\mbox{$|\mathcal{O}^H_{-1}|=|\mathcal{O}^G_{-1}|$, and $|\mathcal{O}^H_{-n}|=2|\mathcal{O}^G_{-n}|$ for $n\geq 2$,}$$ and then make use of the precise estimate of $|\mathcal{O}^G_{-n}|$ given in \cite[Theorem 2.1]{DA} for all $n\geq 1$.

\vspace{.3cm}
\begin{thm}\label{orbits}
Under the action of $H$, the number of orbits in $\Q^*(\sqrt{-n})$ is

$\qquad\qquad|\mathcal{O}^H_{-n}|
= \begin{cases}
|\mathcal{O}^G_{-1}| & \mbox{, if $n=1$}\\\\
2|\mathcal{O}^G_{-n}| & \mbox{, if $n\geq 2$}\\
\end{cases}$

$$\;\,= \begin{cases}
2 & \mbox{, if $n=1$}\\
4 & \mbox{, if $n=2$}\\
8 & \mbox{, if $n=3$} \\
2d(n)+ \frac{4}{3}\;\sum_{i=1}^{\lfloor (n-1)/2 \rfloor}[ d(i^2+n)-2d_{\leq i} (i^2+n)] & \mbox{, if $n>3$.}
\end{cases}$$
Moreover, $|\mathcal{O}^H_{-n}| \equiv 0\;(\bmod \,8)$ for $n \geq 3$.
\end{thm}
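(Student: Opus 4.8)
My plan is to prove the theorem in two independent stages: first reduce $|\mathcal{O}^H_{-n}|$ to $|\mathcal{O}^G_{-n}|$ (which is available from \cite{DA}), then deduce the congruence by a symmetry argument.

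\emph{Stage 1: the formula.} I would start from the observation that $H\triangleleft G$ with $[G:H]=2$: since $x^2=1$, conjugation by $x$ sends $y\mapsto xyx=v$ and $v\mapsto xvx=y$, so it preserves $H=\langle y,v\rangle$, and as $x\notin H$ this forces $G=H\sqcup Hx$. Then for each $\al\in\Q^*(\sqrt{-n})$ one has $\al^G=\al^H\cup(x\al)^H$, a union of at most two $H$-orbits that collapses to one exactly when $x\al\in\al^H$. For $n=1$, every $G$-orbit contains a unique element of norm zero (Lemma \ref{orbits lemma}(i)), necessarily one of $\pm\sqrt{-1}$, both of which are fixed by $x$; hence every $G$-orbit is a single $H$-orbit and $|\mathcal{O}^H_{-1}|=|\mathcal{O}^G_{-1}|$. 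For $n\ge2$, every $G$-orbit contains a pair of distinct norm-zero elements or (if $n\ge3$) a totally positive $y$-cycle (Lemma \ref{orbits lemma}); Lemma \ref{cycles}(v), respectively Lemma \ref{cycles}(iv), then gives $\al^H\ne x(\al)^H$ for such an $\al$, so each $G$-orbit splits into exactly two distinct $H$-orbits and $|\mathcal{O}^H_{-n}|=2|\mathcal{O}^G_{-n}|$. Substituting the value of $|\mathcal{O}^G_{-n}|$ from \cite[Theorem 2.1]{DA} ($2,2,4$ for $n=1,2,3$ and $d(n)+\tfrac23\sum_{i=1}^{\lfloor(n-1)/2\rfloor}[d(i^2+n)-2d_{\leq i}(i^2+n)]$ for $n>3$) then yields the displayed expressions.

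\emph{Stage 2: the congruence, set-up.} For $n=3$ the previous stage and $|\mathcal{O}^G_{-3}|=4$ already give $|\mathcal{O}^H_{-3}|=8$, so I may assume $n>3$; since $|\mathcal{O}^H_{-n}|=2|\mathcal{O}^G_{-n}|$, it is enough to show $4\mid|\mathcal{O}^G_{-n}|$. I would introduce the three commuting involutions of $\Q^*(\sqrt{-n})$ that on parameters act by $\sigma:(a,b,c)\mapsto(-a,b,c)$, $\rho:(a,b,c)\mapsto(a,-b,-c)$, and $\tau=\sigma\rho:(a,b,c)\mapsto(-a,-b,-c)$ — i.e. $\sigma(\al)=-\overline\al$, $\rho(\al)=-\al$, $\tau(\al)=\overline\al$. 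A short computation shows $\sigma$ and $\rho$ each conjugate $x$ to $x$ and $y$ to $v^2$, hence normalize the $G$-action (and the $H$-action), while $\tau$ commutes with all of $G$; so the Klein four-group $V=\{1,\sigma,\rho,\tau\}$ acts on $\mathcal{O}^G_{-n}$. The next point is that neither $\rho$ nor $\tau$ fixes any $G$-orbit: on a norm-zero orbit they move the norm-zero elements $\sqrt{-n}/c$ to ones with parameter $-c$, lying in a different orbit; on a cycle orbit, $\tau$ (commuting with $y$) sends the unique totally positive $y$-cycle to one whose members have $a$-coordinate of the opposite sign, and $\rho$ sends it to a totally negative $v$-cycle that cannot lie in the original orbit — these checks use Table 1 and Lemma \ref{cycles}(i)--(iii). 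Consequently every $V$-orbit in $\mathcal{O}^G_{-n}$ has size $4$ or $2$, the size-$2$ ones being precisely the sets $\{\mathcal{O},\rho(\mathcal{O})\}$ with $\mathcal{O}$ a $\sigma$-fixed $G$-orbit; so, writing $F$ for the set of $\sigma$-fixed $G$-orbits, $|\mathcal{O}^G_{-n}|\equiv|F|\pmod4$ and the task becomes to prove $4\mid|F|$.

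\emph{Stage 2: counting $F$.} Every norm-zero $G$-orbit lies in $F$ (since $\sigma$ fixes $\sqrt{-n}/c$), contributing $d(n)$. For the cycle orbits I would use the correspondence in which a totally positive $y$-cycle determines, via any representative $(a,b,c)$, the unordered triple $\{a,\,b-a,\,c-a\}$ of positive integers, whose entries $a_1,a_2,a_3$ satisfy $a_1a_2+a_2a_3+a_3a_1=n$; conversely each such triple arises from exactly two totally positive $y$-cycles when its entries are distinct (the two cyclic orientations of the representatives $(a_i,\,a_i+a_j,\,a_i+a_k)$, swapped by $\al\mapsto 1/\overline\al$, i.e. $(a,b,c)\mapsto(a,c,b)$) and from exactly one otherwise. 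A Table-1 computation then shows that $\sigma$ fixes the $G$-orbit of a totally positive $y$-cycle iff that cycle is fixed by $\al\mapsto 1/\overline\al$, equivalently (for $n>3$) iff its triple has a repeated entry; such triples are $\{a,a,c\}$ with $a^2+2ac=n$, i.e. $a(a+2c)=n$, hence correspond to divisors $a$ of $n$ with $a<\sqrt n$ and $n/a-a$ even, and the number of $\sigma$-fixed cycle $G$-orbits equals $2T_r$ where $T_r$ counts such divisors. Finally, if $n$ is even then $n/a-a$ is odd for every divisor, so $T_r=0$, while $n$ being square-free and $>3$ has at least two prime factors and thus $4\mid d(n)$; if $n$ is odd then $n/a-a$ is always even and, $n$ not being a perfect square, exactly $d(n)/2$ of its divisors are $<\sqrt n$, so $T_r=d(n)/2$ and $|F|=d(n)+2T_r=2d(n)\equiv0\pmod4$. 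In both cases $4\mid|F|$, hence $4\mid|\mathcal{O}^G_{-n}|$ and $8\mid|\mathcal{O}^H_{-n}|$.

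The hard part will be Stage 2's bookkeeping: verifying by sign-tracking that $\rho$ and $\tau$ have no fixed $G$-orbit, and — the real crux — setting up the triple/cycle correspondence and proving the criterion ``$\sigma$ fixes the orbit $\iff$ the triple repeats'' through explicit manipulation of Table 1. Stage 1 and the concluding divisor count are routine by comparison.
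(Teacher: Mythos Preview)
Your Stage~1 is correct and tracks the paper's argument closely: both show that each $G$-orbit decomposes as $\beta^H\cup(x\beta)^H$ and then invoke Lemma~\ref{cycles}(iv),(v) and Lemma~\ref{orbits lemma} to separate the two pieces for $n\ge2$ (and to collapse them for $n=1$). The one real difference is that you observe $H\triangleleft G$ with $[G:H]=2$ (via the parity-of-$x$ homomorphism, or equivalently because conjugation by $x$ swaps $y$ and $v$), which immediately gives $G=H\sqcup Hx$ and hence $g\notin H\Rightarrow xg\in H$. The paper instead verifies $xg\in H$ by an explicit case analysis over all reduced word shapes $xh_i$, $h_ix$, $y^{\gamma}xh_i$, $h_ixy^{\gamma}$; your structural shortcut is cleaner and arrives at the same conclusion.

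Your Stage~2, on the other hand, is doing strictly more than the paper. The paper does not prove the congruence $|\mathcal{O}^H_{-n}|\equiv 0\pmod 8$ at all: it simply quotes \cite[Theorem~2.1]{DA}, which already asserts $|\mathcal{O}^G_{-n}|\equiv 0\pmod 4$ for $n\ge3$, and multiplies by~$2$. Your Klein-four symmetry argument (the involutions $\sigma,\rho,\tau$ normalising the $G$-action, with $\rho,\tau$ shown to have no fixed $G$-orbit so that $|\mathcal{O}^G_{-n}|\equiv|F|\pmod 4$, followed by the triple parametrisation of totally positive $y$-cycles and the divisor count for repeated-entry triples) is a genuine independent proof of that input. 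It is correct as outlined --- the key checks that $\rho$ and $\tau$ fix no $G$-orbit reduce, via uniqueness of the totally positive $y$-cycle and of its $x$-image totally negative $v$-cycle, to the small coordinate identities you indicate, and the $\sigma$-fixed criterion ``triple has a repeat'' comes out of comparing $\sigma\beta$ with $x\beta,\,vx\beta,\,v^2x\beta$. So your route buys self-containment at the cost of the extra bookkeeping you flag; the paper's route is shorter but leans entirely on \cite{DA} for the congruence.
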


\begin{proof}
We begin with the case $n\geq 2$, where we show that $|\mathcal{O}_{-n}^H|=2|\mathcal{O}_{-n}^G|$. We show this equality by showing that every $G$-orbit splits into two distinct $H$-orbits. Let $\al^G\in \mathcal{O}_{-n}^G$. We know from Lemma \ref{orbits lemma} (ii, iii) that $\al^G$ either contains a unique pair of distinct elements of norm zero or a unique totally positive cycle, but not both. We proceed by dealing with these two possibilities.

Firstly, suppose that $\al^G$ contains a unique pair of distinct elements $\be$ and $x(\be)$ of norm zero. Since $\be\neq x(\be)$ in this case, we get from Lemma \ref{cycles} (v) that it cannot be the case that both elements belong to $\al^H$. We show that $\al^G$ splits into two distinct $H$-orbits ($\al^H$ and $\be^H$) or ($\al^H$ and $x(\be)^H$) by showing that $\al^H$ must contain one and only one of these two norm-zero elements while the other belongs to $\al^G-\al^H$. If $\be\in \al^H$, then (by Lemma \ref{cycles} (v)) $\al^G$ is the disjoint union of $\al^H$ and $x(\be)^H$ and we are done. Similarly, if $x(\be)\in \al^H$, then $\al^G$ is the disjoint union of $\al^H$ and $\be^H$. Now, it remains to show that one of $\be$ or $x(\be)$ must belong to $\al^H$. Assume that $\be\not\in \al^H$; we show that $x(\be)\in \al^H$. Since $\al\in \al^G-\al^H$, there is some $g\in G-H$ such that $\be=g(\al)$. In order to show that $x(\be)\in \al^H$, we argue that $xg\in H$, which then would imply that $x(\be)=xg(\al)\in \al^H$ as desired. Keeping in mind the forms $h_0=1$ and $h_1, \dots, h_6$ that elements of $H$ can take (see the proof of Lemma \ref{cycles} (iv)), we see that $g$, as an element of $G$ but not an element of $H$, can take one of the forms $xh_i$, $h_i x$, $y^\ga x h_i$, or $h_i x y^\ga$ for an appropriate $h_i$, $i=0, \dots, 6$, $\ga\in \{1,2\}$. We do the following argument that exhausts all possible cases.

If $g=xh_i$, then $xg=h_i\in H$ for $i=0, \dots, 6$.

If $g=y^\ga x h_i$, then $xg=xy^\ga x h_i=v^\ga h_i\in H$ for $i=0, \dots, 6$.

If $g=h_0 x=x$, then $xg=1\in H$.

If $g=h_0xy^\ga$, then $xg=y^\ga\in H$.

If $g=h_1 x=y^\ep x$, then $xg=xy^\ep x = v^\ep\in H$.

If $g=h_1 x y^\ga=y^\ep x y^\ga$, then $xg=xy^\ep x y^\ga=v^\ep y^\ga\in H$.

If $g=h_2 x=v^\del x=xy^\del$, then $xg=y^\del\in H$.

If $g=h_2 xy^\ga=v^\del x y^\ga=xy^{\del+\ga}$, then $xg=y^{\del+\ga}\in H$.

If $g=h_3 x$, then $xg= x(y^{\ep_k}v^{\del_k} \dots y^{\ep_1}v^{\del_1}) x=v^{\ep_k} y^{\del_k} \dots v^{\ep_1} y^{\del_1}\in H$.

If $g=h_3 x y^\ga$, then $xg=x(y^{\ep_k}v^{\del_k} \dots y^{\ep_1}v^{\del_1})xy^\ga=v^{\ep_k}y^{\del_k}\dots v^{\ep_1} y^{\del_1+\ga}\in H$.

If $g=h_4 x$, then $xg=x(v^{\del_k}y^{\ep_{k-1}}v^{\del_{k-1}} \dots y^{\ep_1}v^{\del_1})x=y^{\del_k}v^{\ep_{k-1}}y^{\del_{k-1}}\dots v^{\ep_1} y^{\del_1}\in H$.

If $g=h_4 xy^\ga$, then $xg=x(v^{\del_k}y^{\ep_{k-1}}v^{\del_{k-1}} \dots y^{\ep_1}v^{\del_1})xy^\ga= y^{\del_k}v^{\ep_{k-1}} y^{\del_{k-1}} \dots v^{\ep_1} y^{\del_1+\ga}\in H$.

If $g=h_5 x$, then $xg=x(v^{\del_k}y^{\ep_k} \dots v^{\del_1}y^{\ep_1})x=y^{\del_k}v^{\ep_k}\dots y^{\del_1}v^{\ep_1}\in H$.

If $g=h_5xy^\ga$, then $xg=x(v^{\del_k}y^{\ep_k} \dots v^{\del_1}y^{\ep_1})xy^\ga=y^{\del_k} v^{\ep_k} \dots y^{\del_1}v^{\ep_1}y^\ga\in H$.

If $g=h_6 x$, then $xg=x(y^{\ep_k}v^{\del_{k-1}}y^{\ep_{k-1}} \dots v^{\del_1}y^{\ep_1})x=v^{\ep_k}y^{\del_{k-1}} v^{\ep_{k-1}}\dots y^{\del_1}v^{\ep_1}\in H$.

If $g=h_6 xy^\ga$, then $xg=x(y^{\ep_k}v^{\del_{k-1}}y^{\ep_{k-1}} \dots v^{\del_1}y^{\ep_1})xy^\ga=v^{\ep_k}y^{\del_{k-1}}v^{\ep_{k-1}}\dots y^{\del_1}v^{\ep_1}y^\ga\in H$.\\
This shows that $x(\be)\in\al^H$. Similarly, if $x(\be)\not\in \al^H$, then $\be\in \al^H$.

Secondly, suppose that $\al^G$ contains a unique totally positive cycle $\stackrel{y}{\wh{\;\be\;}}$. By Lemma \ref{cycles} (iii), $\stackrel{v}{\,\wh{x(\be)\,}}$ is a totally negative cycle which is obviously contained in $\al^G$. Note also, by Lemma \ref{cycles} (iv), that it cannot be the case that both cycles are contained in $\al^H$. Following exactly the same argument of the zero-norm elements above, we conclude that $\al^G$ is either the disjoint union of $\al^H$ and $\be^H$ or is the disjoint union of $\al^H$ and $x(\be)^H$, and therefore $\al^G$ splits into two distinct $H$-orbits in this case as well.

We have, so far, proved that $|\mathcal{O}_{-n}^H|=2|\mathcal{O}_{-n}^G|$ for $n\geq 2$. The proof would by complete in this case by applying \cite[Theorem 2.1]{DA}, which precisely gives the value of $|\mathcal{O}_{-n}^G|$ as follows:
\begin{align*}
|\mathcal{O}^G_{-n}|
 &= \left \{ \begin{array} {c@{\,,\,}l} 2 & \mbox{if $n=2$}\\
 4 & \mbox{if $n=3$} \\
d(n)+ \frac{2}{3}\;\sum_{i=1}^{\lfloor (n-1)/2 \rfloor}[ d(i^2+n)-2d_{\leq i} (i^2+n)] & \mbox{otherwise.}
\end{array}\right.
\end{align*}
and, moreover, declares the congruence $|\mathcal{O}^G_{-n}| \equiv 0\;(\bmod \,4)$ for $n\geq 3$.

Finally, for the case $n=1$, the only elements of norm zero in $\Q^*(\sqrt{-1})$ are $\pm \sqrt{-1}$, and both are fixed by $x$ (see the proof of Lemma \ref{cycles} (v)). We also have by Lemma \ref{orbits lemma} (i) that $\al^G$ contains a unique element of norm zero; that is $\al^G$ must contain either $\sqrt{-1}$ or $- \sqrt{-1}$ but not both. Suppose that $\sqrt{-1} \in \al^G$. If $\sqrt{-1}\not\in \al^H$, then, by following the argument in the second paragraph of this proof, $x(\sqrt{-1})\in \al^H$. But $x(\sqrt{-1})=\sqrt{-1}$, a contradiction. Thus, $\sqrt{-1}\in \al^H$. Similarly, if $- \sqrt{-1}\in \al^G$, then $- \sqrt{-1}\in \al^H$. This shows that $\al^H=\al^G$ and hence $|\mathcal{O}^H_{-1}|=|\mathcal{O}^G_{-1}|$, which in turn equals 2 (by \cite[Theorem 2.1]{DA}).
\end{proof}
\vspace{.3cm}
\begin{remarks} We present below some examples showing that the formulas given by \cite[Theorem 9]{AM2} are incorrect in general (see Claim 3 in Section 2).

1. For $n=1$, the number of orbits $|\mathcal{O}_{-1}^H|$ given by \cite[Theorem 9]{AM2} is
$$2[d(1)+2d(2)-6]=2[1+4-6]=-2,$$
which is absurd. By Theorem \ref{orbits}, however, $|\mathcal{O}_{-1}^H|= 2$.

2. For $n=21$, the number of orbits $|\mathcal{O}_{-21}^H|$ given by \cite[Theorem 9]{AM2} is
$$2[d(21)+2d(22)-6]=2[4+8-6]=12.$$ This estimate immediately appears to be wrong when tested using the congruence relation in Theorem \ref{orbits}. Nonetheless, Theorem \ref{orbits} gives precisely the following estimate
\begin{align*}
|\mathcal{O}_{-21}^H|&= 2d(21)+\frac{4}{3}\,\sum_{i=1}^{10} \left[d(i^2+21)-2d_{\leq i}(i^2+21)\right]\\
                     &= 8+\frac{4}{3}\{d(22)-2d_{\leq 1}(22) + d(25)-2d_{\leq 2}(25) + d(30)-2d_{\leq 3}(30)+ d(37)-2d_{\leq 4}(37) \\
                     & \qquad\qquad + d(46)-2d_{\leq 5}(46) + d(57)-2d_{\leq 6}(57) + d(70)-2d_{\leq 7}(70) + d(85)-2d_{\leq 8}(85) \\
                      & \qquad\qquad + d(102)-2d_{\leq 9}(102) + d(121)-2d_{\leq 10}(121)\} \\
                     &= 8 + \frac{4}{3}\{4-2+3-2+8-6+2-2 + 4-4+4-4+8-8+4-4 +8-8+3-2\}\\
                     &=8+8\\ &=16.
\end{align*}

3. For $n=26$, the number of orbits $|\mathcal{O}_{-26}^H|$ given by \cite[Theorem 9]{AM2} is
$$2[d(26)+2d(27)-4]=2[4+8-4]=16.$$
Theorem \ref{orbits} gives precisely the following estimate
\begin{align*}
|\mathcal{O}_{-26}^H|&= 2d(26)+\frac{4}{3}\,\sum_{i=1}^{12} \left[d(i^2+26)-2d_{\leq i}(i^2+26)\right]\\
                     &= 8+\frac{4}{3}\{d(27)-2d_{\leq 1}(27) + d(30)-2d_{\leq 2}(30) + d(35)-2d_{\leq 3}(35) \\
                     & \qquad\qquad + d(42)-2d_{\leq 4}(42) + d(51)-2d_{\leq 5}(51) + d(62)-2d_{\leq 6}(62) \\
                      & \qquad\qquad + d(75)-2d_{\leq 7}(75) + d(90)-2d_{\leq 8}(90) + d(107)-2d_{\leq 9}(107) \\
                      &\qquad\qquad + d(126)-2d_{\leq 10}(126)+ d(147)-2d_{\leq 11}(147) + d(170)-2d_{\leq 12}(170)\} \\
                     &= 8 + \frac{4}{3}\{4-2+8-4+4-2+8-6 + 4-4+4-4+6-6+12-10 +2-2\\
                     &\qquad\qquad\; +12-12+6-6+8-8\}\\
                     &=8+16\\ &=24.
\end{align*}
\end{remarks}

\section{Conclusion}
Consider the subgroup $H=\langle y,\,v\,|\,y^3=v^3=1\rangle$ of $\mbox{PSL}(2,\mathbb{Z})=\langle x, \, y \,|\, x^2=y^3=1\rangle$, where $x: z\mapsto -1/z$, $y:z\mapsto (z-1)/z$, and $v=xyx: z \mapsto -1/(z+1)$. For a square-free $n\in \N$, it is shown that the number of orbits in the set
$$\Q^*(\sqrt{-n})=\{\frac{a+\sqrt{-n}}{c} \in \Q(\sqrt{-n}) \,|\, a,b=\frac{a^2+n}{c}, c \in \Z, c\neq 0\},$$
under the action of $H$, is given as follows:
$$\begin{cases}
2 & \mbox{, if $n=1$}\\
4 & \mbox{, if $n=2$}\\
8 & \mbox{, if $n=3$} \\
2d(n)+ \frac{4}{3}\;\sum_{i=1}^{\lfloor (n-1)/2 \rfloor}[ d(i^2+n)-2d_{\leq i} (i^2+n)] & \mbox{, if $n>3$}
\end{cases}$$
and is congruent to 0 modulo 8 for all $n\geq 3$, where $d(k)$ denotes the number of positive divisors of $k$, $d_{\leq i}(k)$ denotes the number of positive divisors of $k$ which do not exceed $i$ for $i,k\in \N$ with $i\leq k$, and $\lfloor \; . \; \rfloor$ is the floor function.


\end{document}